\documentclass[oneside,english]{amsart}
\usepackage[T1]{fontenc}
\usepackage[latin9]{inputenc}
\usepackage{geometry}
\geometry{verbose,tmargin=3cm,bmargin=3cm,lmargin=3cm,rmargin=3cm}
\usepackage{amsthm}
\usepackage{amssymb}
\usepackage{esint}

\makeatletter
\numberwithin{equation}{section}
\numberwithin{figure}{section}
\theoremstyle{plain}
\newtheorem{thm}{\protect\theoremname}
  \theoremstyle{plain}
  \newtheorem{lem}[thm]{\protect\lemmaname}

\newtheorem{constr}[thm]{\protect\constname}  
\providecommand{\constname}{Construction}

\makeatother

\usepackage{babel}
  \providecommand{\lemmaname}{Lemma}
\providecommand{\theoremname}{Theorem}

\begin{document}

\title{A Short Proof that Minimal Sets of Planar Ordinary Differential Equations
are Trivial}

\author{Ido Bright}

\address{Department of Applied Mathematics, University of Washington.}
\begin{abstract}
We present a short proof, relaying on the divergence theorem, verifying
that minimal sets in the plane are trivial.
\end{abstract}
\maketitle
\global\long\def\seqf#1{#1_{1}\left(\cdot\right),#1_{2}\left(\cdot\right),\dots}

\global\long\def\seq#1{#1_{1},#1_{2},\dots}

\global\long\def\Bz#1{B\left(\mathbf{x}^{*}\left(0\right),#1\right)}

\global\long\def\seqfn#1#2{#1_{1}\left(\cdot\right),#1_{2}\left(\cdot\right),\dots,#1_{#2}\left(\cdot\right)}

\global\long\def\pd#1#2{#1_{1}\left(\cdot\right),#1_{2}\left(\cdot\right),\dots,#1_{#2}\left(\cdot\right)}

\global\long\def\pd{\left(\cdot\right)}

\global\long\def\R{\mathbb{R}}

\global\long\def\Rt{\mathbb{R}^{2}}

\global\long\def\semi{[0,\infty)}

\global\long\def\x{\mathbf{x}^{*}\left(\cdot\right)}

\global\long\def\xs#1{\mathbf{x}^{*}\left(#1\right)}

\global\long\def\ys#1{\mathbf{y}^{*}\left(#1\right)}

\global\long\def\n#1{\mathbf{N}_{#1}}

\global\long\def\t#1{\mathbf{T}_{#1}}

\global\long\def\xd{\frac{d}{dt}\mathbf{x}^{*}\left(\cdot\right)}

\global\long\def\xdt{\frac{d}{dt}\mathbf{x}^{*}\left(t\right)}

\global\long\def\y{y^{*}\left(\cdot\right)}

\global\long\def\y{\mathbf{y}\left(\cdot\right)}

\global\long\def\z{z^{*}\left(\cdot\right)}
`

\global\long\def\xt{\mathbf{x}^{*}\left(t\right)}

\global\long\def\yt{y^{*}\left(t\right)}

\global\long\def\zt{z^{*}\left(t\right)}

\section{Introduction}

We consider the ordinary differential equation in the plane defined
by 
\begin{equation}
\frac{d\mathbf{x}}{dt}=\mathbf{f}(\mathbf{x}),\label{eq:ode}
\end{equation}
where $\mathbf{f}:\R^{2}\rightarrow\R^{2}$ is a locally Lipschitz
function. (Although, we only use uniqueness with respect to initial
conditions of (\ref{eq:ode}) and the continuity of $\mathbf{f}$.).

A minimal set is a nonempty closed invariant set, which is minimal
with respect inclusions. A trivial minimal set is a set that is the
image if either a stationary solution or a periodic solution. 

We present a new short proof of the following well known result.
\begin{thm}
Any minimal set of (\ref{eq:ode}) either corresponds to a stationary
solution or to the image of a periodic solution, namely, all minimal
sets are trivial.
\end{thm}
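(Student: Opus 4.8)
The plan is to reduce to the classical Poincaré--Bendixson dichotomy, but to extract the crucial monotonicity of crossings from the divergence theorem rather than from a purely topological self-intersection count. First I would record the basic consequence of minimality: for any point of $M$ the closure of its orbit is a nonempty closed invariant subset of $M$, hence equals $M$, so \emph{every} orbit in $M$ is dense in $M$ and $M$ is in particular recurrent. I would then dispose of the degenerate case: if $\mathbf{f}$ vanishes at some point $p\in M$, then $\{p\}$ is a nonempty closed invariant subset of $M$, so minimality forces $M=\{p\}$, a stationary solution. From here on I may assume $\mathbf{f}\neq 0$ throughout $M$, and (as is standard for these limit-set arguments) that $M$ is compact, so that forward orbits are bounded and $\omega$-limit sets are nonempty subsets of $M$.

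Next I would fix a regular point $p\in M$ and a short transversal segment $\Sigma$ through $p$ on which $\mathbf{f}$ is nowhere tangent, so that $\mathbf{f}\cdot\mathbf{n}$ keeps a constant sign along $\Sigma$ for a fixed normal $\mathbf{n}$. By density the orbit of $p$ meets $\Sigma$ at an increasing sequence of times, giving crossing points $p_0,p_1,p_2,\dots$. The heart of the proof is to show these advance monotonically along $\Sigma$. I would form the Jordan curve obtained by concatenating the orbit arc from $p_0$ to $p_1$ with the sub-segment of $\Sigma$ joining $p_1$ back to $p_0$, and let $D$ be the region it bounds. Applying the divergence theorem to $\mathbf{f}$ on $D$, the orbit arc contributes zero flux because $\mathbf{f}$ is tangent to it, while along the $\Sigma$-segment $\mathbf{f}\cdot\mathbf{n}$ has constant sign; hence $\mathbf{f}$ crosses $\partial D$ only through $\Sigma$ and only in one direction, exhibiting $D$ as a one-sided trapping region. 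Consequently the orbit cannot re-enter $D$ across $\Sigma$ against the flow, forcing $p_2$ to lie on the same side of $p_1$ that $p_1$ lies of $p_0$; iterating yields a monotone sequence $\{p_k\}$ on $\Sigma$.

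Finally I would convert monotonicity into periodicity. If $p_0=p_1$, then by uniqueness the orbit is periodic, its image is a nonempty closed invariant subset of $M$, and minimality gives that $M$ is exactly this periodic orbit. If instead $p_0\neq p_1$, the monotone bounded sequence $\{p_k\}$ converges to some $q\in\Sigma\cap M$; applying the same transversal analysis to the dense (hence recurrent) orbit through $q$ produces crossings that are simultaneously forced to approach $p_0$ and to remain monotonically on one side of $q$, which is impossible. This contradiction shows the regular case is in fact periodic.

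The step I expect to be the main obstacle is making the divergence-theorem/monotonicity argument fully rigorous under the paper's weak hypotheses: since $\mathbf{f}$ is only assumed continuous (locally Lipschitz), $\operatorname{div}\mathbf{f}$ need not exist classically, so the flux identity must either be justified by mollification or replaced by the bare statement that a trajectory cannot cross $\Sigma$ into $D$ against the flow. Closely tied to this is verifying that $D$ really is a one-sided trapping ``bag'' and keeping the orientation bookkeeping consistent, so that the trapping conclusion—and with it the final contradiction with recurrence—is airtight.
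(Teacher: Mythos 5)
Your proposal is correct in substance, but it is not a new route: it is the classical Poincar\'e--Bendixson argument (transversal segment, monotone successive crossings, recurrence) --- precisely the ``text-book proof\ldots employing dynamical arguments'' that the paper sets out to replace. Moreover, in your version the divergence theorem does no actual work. The trapping property of the region $D$ bounded by the orbit arc and the sub-segment of $\Sigma$ is a pointwise statement: no trajectory can cross the orbit arc (uniqueness), and $\mathbf{f}\cdot\mathbf{n}$ has fixed sign on $\Sigma$ (transversality); combined with the Jordan curve theorem these give one-sided invariance. The flux identity $\int_{\partial D}\mathbf{f}\cdot\mathbf{n}\,d\mathbf{y}=\int_{D}\nabla\cdot\mathbf{f}\,d\mathbf{y}$ contributes nothing here: $\nabla\cdot\mathbf{f}$ has no sign (and need not exist classically for Lipschitz $\mathbf{f}$), and a statement about total flux cannot by itself forbid individual trajectories from entering $D$. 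You concede as much in your closing paragraph; with that ``bare statement'' fallback your proof is exactly the standard one, and it is correct (your standing compactness assumption is also needed by the paper, implicitly, for Lemma~\ref{lem:recurense}).

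The paper uses the divergence theorem quite differently, and there it is the engine rather than decoration. Via Lemma~\ref{lem:zero int}, the unit normal over any rectifiable boundary integrates to zero; applied to $O_{i}\cap B$ --- where $O_{i}$ is the interior of a Jordan curve made of a long trajectory arc closed by a short segment near $\mathbf{x}^{*}(0)$, and $B=B(\mathbf{y}_{0},r_{0})$ is a small ball far from that segment --- it yields the bound $\left|\int_{\{t\le t_{i}:\,\mathbf{x}^{*}(t)\in B\}}\mathbf{f}(\mathbf{x}^{*}(t))\,dt\right|\le2\pi r_{0}$, uniformly in $i$. Minimality forces the occupation time of $B$ to be infinite, so $\mathbf{0}$ must lie in the convex hull of $\mathbf{f}(\bar{B})$, and letting $r_{0}\to0$ gives $\mathbf{f}(\mathbf{y}_{0})=\mathbf{0}$, contradicting the assumption that $\mathbf{f}$ does not vanish on the minimal set. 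The trade-off is clear: your route requires the transversal/flow-box machinery, the monotone-crossing lemma, and the Jordan curve theorem; the paper's route needs none of that --- only uniqueness, continuity of $\mathbf{f}$, a Sard-type lemma, and the divergence theorem for sets of finite perimeter --- and it is this flux-bound idea, not the monotonicity argument, that generalizes (as the paper's references indicate) beyond the planar ODE setting.
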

The text-book proof of this theorem relays on the Poincaré\textendash{}Bendixson
theorem, and employs dynamical arguments. The proof presented in this
paper relays on a different argument, relaying on a property of the
velocity of Jordan curves. This idea was introduced in \cite{art-bright}
and further developed in \cite{Bright201284,Bright_Lee}.

\section{Proof of Main Result}

In the proof of the main result we use the following notation. The
2-dimensional euclidean space is denoted by $\R^{2}$, and the norm
of a vector $\mathbf{y}\in\R^{2}$ is denoted by $\left|\mathbf{y}\right|$.
The open ball in $\Rt$, centered at $\mathbf{y}$ with radius $r$,
is denoted by $B\left(\mathbf{y},r\right)$. The closure of an open
set \textbf{$O\subset\R^{2}$} is denoted by $\bar{O}$, its boundary
by $\partial O$, and its exterior normal and tangent vector at the
point $\mathbf{y}\in\partial O$ are denoted by $\n{\partial O}\left(\mathbf{y}\right)$
and $\t{\partial O}\left(\mathbf{y}\right)$, respectively.

We shall use the following results that are well known in the smooth
case.
\begin{lem}
\label{lem:zero int}Let \textbf{$O\subset\R^{2}$} be a bounded open
set with rectifiable boundary. Then 
\[
\mathbf{v}=\int_{\partial O}\n{\partial O}\left(\mathbf{y}\right)d\mathbf{y}=\mathbf{0}\in\Rt.
\]
\end{lem}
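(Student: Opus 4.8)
The plan is to test the vector identity against an arbitrary constant direction and invoke the divergence theorem. Fix $\mathbf{c}\in\Rt$ and consider the constant vector field $\mathbf{F}\equiv\mathbf{c}$ on $O$. Since the flux of $\mathbf{F}$ through $\partial O$ is $\int_{\partial O}\mathbf{c}\cdot\n{\partial O}(\mathbf{y})\,d\mathbf{y}=\mathbf{c}\cdot\mathbf{v}$, the divergence theorem would give
\[
\mathbf{c}\cdot\mathbf{v}=\int_{O}\nabla\cdot\mathbf{F}\,dA=\int_{O}0\,dA=0.
\]
As this holds for every $\mathbf{c}$, in particular for the coordinate directions $\mathbf{e}_{1}$ and $\mathbf{e}_{2}$, both components of $\mathbf{v}$ vanish and $\mathbf{v}=\mathbf{0}$.

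The step I expect to be the real obstacle is justifying the divergence theorem under the sole hypothesis of a \emph{rectifiable} boundary, since the classical statement assumes a smooth or at least Lipschitz $\partial O$. I would resolve this in one of two ways. The first is to appeal to the Gauss--Green theorem for sets of finite perimeter, where the measure-theoretic outer normal takes the role of $\n{\partial O}$ and the identity above holds verbatim; one must then check that a bounded open set with rectifiable boundary has finite perimeter and that the two notions of normal agree $\mathcal{H}^{1}$-almost everywhere on $\partial O$.

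The second, more self-contained route avoids the divergence theorem entirely and handles rectifiability directly. Write $\partial O$ as a finite union of rectifiable Jordan curves and parametrize each by arc length, $\gamma_{k}\colon[0,L_{k}]\to\Rt$ with $\gamma_{k}(0)=\gamma_{k}(L_{k})$. Such a parametrization is $1$-Lipschitz, hence absolutely continuous, and almost everywhere the outward normal equals a fixed rotation $R_{k}\gamma_{k}'(s)$ of the unit tangent $\gamma_{k}'(s)$ by $\pm\pi/2$ (the sign is immaterial, as each loop is treated separately). Consequently
\[
\mathbf{v}=\sum_{k}R_{k}\int_{0}^{L_{k}}\gamma_{k}'(s)\,ds=\sum_{k}R_{k}\bigl(\gamma_{k}(L_{k})-\gamma_{k}(0)\bigr)=\mathbf{0},
\]
by the fundamental theorem of calculus for absolutely continuous functions, using only that each boundary loop is closed. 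This argument needs no regularity beyond rectifiability and makes transparent why the result is essentially topological.
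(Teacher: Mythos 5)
Your first route is essentially the paper's own proof: the paper tests against the single direction $\mathbf{g}\equiv\mathbf{v}/\left|\mathbf{v}\right|$ (rather than all $\mathbf{c}$, a cosmetic difference) and resolves the regularity issue exactly as you propose, by invoking the divergence theorem for sets of finite perimeter (citing Pfeffer), with the measure-theoretic outer normal playing the role of $\n{\partial O}$. So on the main line of argument you and the paper agree.

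Your second, self-contained route is a genuinely different idea, but as written it has a gap. A bounded open set with rectifiable boundary need not have $\partial O$ equal to a finite union of rectifiable Jordan curves carrying a well-defined outer normal: take the disk with a radial slit removed, $O=B\left(\mathbf{0},1\right)\backslash\left([0,1)\times\left\{ 0\right\} \right)$. Its boundary is rectifiable, but on the slit the set lies on both sides, so no outer normal exists there and no decomposition into closed loops accounts for that portion; in the finite-perimeter framework such points simply drop out of the reduced boundary, which is precisely why the Gauss--Green route is the robust one. Moreover, even for a genuine Jordan domain, the claim that the outward normal is $\mathcal{H}^{1}$-almost everywhere a \emph{consistently signed} rotation $R_{k}\gamma_{k}'(s)$ of the arc-length tangent is itself essentially the Gauss--Green theorem for rectifiable Jordan curves, not a direct consequence of Rademacher differentiability of the parametrization: one must rule out that the tangent exists while the domain fails to lie locally on one side, and one must fix the sign globally along the loop. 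Your loop-closing computation via the fundamental theorem of calculus is a good heuristic for why the lemma is topological in nature, and it would suffice for the Jordan domains to which the paper actually applies the lemma, but to be rigorous it needs that tangent--normal correspondence as an imported ingredient, whereas the paper's citation of the finite-perimeter divergence theorem covers the general statement in one step.
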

\begin{proof}
Assume in contradiction that $\mathbf{v}\neq\mathbf{0}$, and set
$\mathbf{g}:\R^{2}\rightarrow\R^{2}$ by $\mathbf{g}\equiv\frac{\mathbf{v}}{\left|\mathbf{v}\right|}$.
The divergence theorem for sets of finite perimeter (see, e.g., \cite{pfeffer2012divergence})
implies that 
\[
\left|\mathbf{v}\right|=\frac{\mathbf{v}}{\left|\mathbf{v}\right|}\mathbf{v}=\left|\int_{\partial O}\mathbf{g}\left(\mathbf{y}\right)\n{\partial O}\left(\mathbf{y}\right)d\mathbf{y}\right|=\left|\int_{O}\nabla\cdot\mathbf{g}\left(\mathbf{y}\right)d\mathbf{y}\right|=0,
\]
in contradiction.
\end{proof}
The following lemma appears in \cite{wells1975note}.
\begin{lem}
\label{lem:Sard}Suppose $I\subset\R$ is a bounded interval and $g:I\rightarrow\R$
is a Lipschitz function. Then for almost every $r\in\R$ the set $g^{-1}\left(r\right)=\left\{ t\in I|g\left(t\right)=r\right\} $
is finite.
\end{lem}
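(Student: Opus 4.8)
The plan is to deduce the statement from the one-dimensional area (coarea) formula for Lipschitz maps, which expresses the total variation of $g$ as the integral over all levels of the size of the corresponding level set. Writing $N(r)=\#\,g^{-1}(r)\in\{0,1,2,\dots\}\cup\{\infty\}$ for the Banach indicatrix of $g$, I would invoke the identity
\[
\int_I\left|g'(t)\right|\,dt=\int_{\R}N(r)\,dr,
\]
valid for any Lipschitz (indeed, any continuous bounded-variation) function on a bounded interval. Since $g$ is Lipschitz, differentiability almost everywhere gives $\left|g'(t)\right|\le L$ for a Lipschitz constant $L$, so the left-hand side is at most $L\left|I\right|<\infty$. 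Hence the nonnegative measurable function $N$ has finite integral, which forces $N(r)<\infty$ for almost every $r$; that is precisely the assertion that $g^{-1}(r)$ is finite for almost every $r$.

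If one prefers to avoid the coarea machinery, I would argue directly through a one-dimensional Sard theorem. First observe that if $g^{-1}(r)$ is infinite, then, being an infinite subset of the bounded set $\bar I$, it has an accumulation point $t_0$ (after extending $g$ continuously to $\bar I$ if necessary), and by continuity $g(t_0)=r$. Choosing $t_n\to t_0$ with $t_n\neq t_0$ and $g(t_n)=r$, whenever $g$ is differentiable at $t_0$ the difference quotients $\frac{g(t_n)-g(t_0)}{t_n-t_0}$ all vanish, so $g'(t_0)=0$. Consequently every level $r$ whose preimage is infinite lies in $g(Z)\cup g(D)$, where $Z=\{t:g'(t)=0\}$ is the critical set and $D$ is the null set on which $g$ fails to be differentiable.

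It then remains to check that both images are Lebesgue null. For $g(D)$ this is immediate, since a Lipschitz map sends null sets to null sets. For $g(Z)$ one runs the classical covering argument: fixing $\epsilon>0$, each $t\in Z$ has a neighborhood on which the oscillation of $g$ is at most $\epsilon$ times the length, so a Vitali-type selection covers $g(Z)$ by intervals of total length at most $\epsilon\left|I\right|$, and letting $\epsilon\to0$ yields $\left|g(Z)\right|=0$. As a union of two null sets is null, the set of levels with infinite preimage is null, which is the claim.

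The only genuinely non-formal step, and hence the main obstacle, is the covering estimate for the critical values $g(Z)$ (equivalently, establishing the coarea identity used in the first approach); everything else is bookkeeping with almost-everywhere differentiability and the Lipschitz bound. I expect the measure-theoretic covering to require the most care, specifically the uniformity needed to convert the pointwise smallness of the difference quotients on $Z$ into a global bound on the total length of the covering intervals.
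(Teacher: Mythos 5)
Your proof is correct, but there is no proof in the paper to compare it with: the paper states this lemma and simply cites \cite{wells1975note} for it, so your argument genuinely supplements the text rather than paralleling it. Of your two routes, the first (Banach's indicatrix theorem: for a continuous function of bounded variation on a bounded interval the total variation equals $\int_{\mathbb{R}}N(r)\,dr$; a Lipschitz $g$ is absolutely continuous, so its variation is $\int_{I}|g'|\le L|I|<\infty$, whence $N(r)<\infty$ almost everywhere) is the shorter one, at the cost of invoking Banach's theorem as a black box. The second route is self-contained modulo the a.e.\ differentiability of Lipschitz functions: any value with infinite preimage is attained at an accumulation point $t_{0}\in\bar{I}$, where either $g$ fails to be differentiable, or $g'(t_{0})=0$, or $t_{0}$ is one of the (at most two) endpoints, each contributing a null set of values; $g(D)$ is null because Lipschitz maps preserve null sets, and $g(Z)$ is null by your Vitali covering argument. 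That covering step is indeed where the care is needed, and your outline has the right mechanism: the disjointness of the selected intervals is exactly what converts the pointwise bound $|g(s)-g(t)|\le\epsilon|s-t|$ near each critical point into a global estimate of the form $|g(Z)|\le\epsilon\left(|I|+o(1)\right)$, after discarding the Vitali-exceptional null set (whose image is again null by Lipschitzness). Both arguments are complete; either one would make the paper self-contained at this point.
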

To prove the main theorem, let us now fix a minimal set $\Omega\subset\R^{2}$
 and a solution $\x$ of (\ref{eq:ode}), defined on $[0,\infty)$,
with trajectory contained in $\Omega$. 

We shall also use the following well known fact.
\begin{lem}
\label{lem:recurense}For every $\mathbf{y_{0}}\in\Omega$ and $\delta>0$
there exists $t>s$ such that $\left|\xs t-\mathbf{y_{0}}\right|<\delta$.\end{lem}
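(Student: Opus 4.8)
The plan is to realize the minimal set $\Omega$ as the $\omega$-limit set of the single solution $\x$ and then read the recurrence off directly. Recall that the $\omega$-limit set of $\x$ is
\[
\omega(\x)=\bigcap_{s\ge0}\overline{\left\{ \xs t:\,t\ge s\right\} },
\]
equivalently the set of points $\mathbf{z}\in\Rt$ admitting a sequence $t_{n}\to\infty$ with $\xs{t_{n}}\to\mathbf{z}$. The whole lemma will follow once I show $\omega(\x)=\Omega$, since the membership $\mathbf{y}_{0}\in\omega(\x)$ is exactly the assertion that for every $\delta>0$ and every threshold $s$ there is a $t>s$ with $\left|\xs t-\mathbf{y}_{0}\right|<\delta$.

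To obtain $\omega(\x)=\Omega$ I would verify the three standard properties of the $\omega$-limit set and then invoke minimality. First, $\omega(\x)$ is closed, being an intersection of closed sets. Second, since the forward trajectory $\left\{ \xs t:\,t\ge0\right\} $ lies in $\Omega$ and $\Omega$ is closed, every limit point of the trajectory belongs to $\Omega$, so $\omega(\x)\subseteq\Omega$. Third, $\omega(\x)$ is invariant: if $\xs{t_{n}}\to\mathbf{z}$, then, using the continuity of $\mathbf{f}$ together with uniqueness of solutions of (\ref{eq:ode}) to obtain continuous dependence on initial data, applying the flow of (\ref{eq:ode}) for a fixed time carries $\mathbf{z}$ again into a limit point of the trajectory. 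Finally, $\omega(\x)$ is nonempty because the trajectory remains in the compact set $\Omega$, so the Bolzano--Weierstrass theorem furnishes a convergent subsequence $\xs{t_{n}}$ with $t_{n}\to\infty$.

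With these properties in hand, $\omega(\x)$ is a nonempty closed invariant subset of $\Omega$, and minimality of $\Omega$ forces $\omega(\x)=\Omega$. Fixing $\mathbf{y}_{0}\in\Omega=\omega(\x)$ then yields a sequence $t_{n}\to\infty$ with $\xs{t_{n}}\to\mathbf{y}_{0}$; for prescribed $\delta>0$ and $s$ it suffices to choose $n$ large enough that simultaneously $t_{n}>s$ and $\left|\xs{t_{n}}-\mathbf{y}_{0}\right|<\delta$, and to set $t=t_{n}$. I expect the only delicate points to be the nonemptiness of $\omega(\x)$, which rests on $\Omega$ being compact (this is where the implicit boundedness of the minimal set enters; an unbounded invariant line, for instance, would carry a non-recurrent minimal orbit), and the invariance of $\omega(\x)$, the single step that genuinely uses the continuity of $\mathbf{f}$ and the uniqueness of solutions and therefore deserves a careful statement.
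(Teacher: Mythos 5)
Your proof is correct, but it runs in the opposite direction from the paper's. The paper argues by contradiction: if recurrence to $\mathbf{y}_{0}$ failed past time $s$, the shifted solution $\mathbf{y}^{*}(t)=\mathbf{x}^{*}(s+t)$ would have its trajectory contained in $\Omega\backslash B(\mathbf{y}_{0},\delta)$, and this is declared to contradict minimality -- in one line. You instead prove the stronger, direct statement $\omega(\mathbf{x}^{*}(\cdot))=\Omega$ and read recurrence off from membership in the $\omega$-limit set. The two arguments share the same engine (minimality applied to a closed invariant set manufactured from a forward trajectory), but your version supplies exactly the step the paper leaves implicit: a forward trajectory is neither closed nor fully invariant, so the paper's contradiction really requires passing to the closure or, more precisely, to the $\omega$-limit set of $\mathbf{y}^{*}(\cdot)$, whose nonemptiness, closedness and invariance are the three properties you check. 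Your remark about compactness is also well taken and applies to the paper's proof equally: the paper defines a minimal set only as closed, yet nonemptiness of the limit set (and indeed the theorem itself) fails without boundedness -- a constant field $\mathbf{f}\equiv(1,0)$ makes the line $\mathbb{R}\times\{0\}$ a closed, invariant, inclusion-minimal set that is neither a rest point nor a periodic orbit. So compactness of $\Omega$ is implicitly assumed throughout, and your proof makes visible where it is used. The trade-off: the paper's argument is shorter and stays at the level of trajectories; yours is longer but rigorous as stated and yields the reusable fact $\omega(\mathbf{x}^{*}(\cdot))=\Omega$, which is in any case the standard route.
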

\begin{proof}
Otherwise, suppose that the lamma does not hold for some $\mathbf{y_{0}},\delta$
and $s$. Then the curve $\ys t=\xs{s+t}$ is a solution of (\ref{eq:ode})
with trajectory contained in $\Omega\backslash B\left(\mathbf{y_{0}},\delta\right)$
for a suitable $\delta>0$, in contradiction to the minimality of
$\Omega$. 
\end{proof}
If $\Omega$ is not a singleton we choose $D>0$ such that $\Omega\backslash B\left(\xs 0,3D\right)\neq\emptyset$
and apply the following construction:

\begin{constr}

\label{con:Construction} Set $\delta_{0}=D$ and $t_{0}$ as the
first time point where $\x$ meets $\partial\Bz{\delta_{0}}$. For
$i=1,2,\dots$ do the following:
\begin{enumerate}
\item Choose $\delta_{i}<\delta_{i-1}/2$ small enough, such that $\left|\xs 0-\xs t\right|>\delta_{i}$
for all $t\in\left[t_{0},t_{i-1}\right]$.
\item Set $t_{i}$ as the first time point after $t_{0}$ where the curve
$\x$ meets $\partial\Bz{\delta_{i}}$. (Here we use Lemma \ref{lem:recurense}).
\item Starting from $\xs{t_{i}}$ follow the line connecting it to $ $$\xs 0$,
until first meeting a point in $\xs{\left[0,t_{0}\right]}$. Let $\xs{s_{i}}$
be this point.
\item Let $\gamma_{i}$ be the parametrized Jordan curve obtained by following
the curve $\x$ in the interval $\left[s_{i},t_{i}\right]$ and then
the line connecting its endpoints, with velocity of norm $1$.
\end{enumerate}
\end{constr}
\begin{lem}
\label{lem:periodic so}If $t_{i}\rightarrow t^{*}$ then $\xs 0=\xs{t^{*}}$,
and $\x$ is a periodic solution with image $\Omega$.\end{lem}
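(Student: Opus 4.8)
The plan is to read off both conclusions from the two facts that $\delta_i\to0$ and that $t_i\to t^*$. First I would record that the choice $\delta_i<\delta_{i-1}/2$ in step~(1) of Construction~\ref{con:Construction} forces $\delta_i<D/2^{i}\to0$. Since $\xs{t_i}\in\partial\Bz{\delta_i}$ by step~(2), we have $\left|\xs{t_i}-\xs0\right|=\delta_i\to0$, so $\xs{t_i}\to\xs0$. On the other hand, the hypothesis $t_i\to t^*$ together with continuity of the solution $\x$ gives $\xs{t_i}\to\xs{t^*}$. Comparing the two limits yields $\xs{t^*}=\xs0$, which is the first assertion.

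Next I would upgrade this coincidence of endpoints to periodicity. The key preliminary point is that $t^*>0$: each $t_i$ is chosen strictly after $t_0$, and $t_0>0$ because $\xs0$ is the center of $\Bz{D}$ with $D>0$, so the trajectory cannot reach $\partial\Bz{D}$ instantaneously; hence $t^*=\lim_i t_i\ge t_0>0$. Now $\xs{t^*}=\xs0$ with $t^*>0$, combined with uniqueness of solutions of (\ref{eq:ode}) with respect to initial conditions, forces $\xs{t+t^*}=\xt$ for every $t\ge0$. Thus $\x$ is periodic with period $t^*$, and its image is the compact set $\Gamma:=\xs{\left[0,t^*\right]}$.

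Finally I would identify $\Gamma$ with $\Omega$ using minimality. The set $\Gamma$ is nonempty and closed, and it is invariant: through any point $\xs s\in\Gamma$ the solution is $t\mapsto\xs{s+t}$, whose image is again $\Gamma$ by periodicity. Moreover $\Gamma\subset\xs{\semi}\subset\Omega$ by hypothesis. Since $\Omega$ is minimal among nonempty closed invariant sets, we conclude $\Gamma=\Omega$, so $\x$ is a periodic solution whose image is $\Omega$.

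The genuinely delicate points are the two I flagged: verifying $t^*>0$ (so that the period is nondegenerate and the orbit is a true Jordan curve rather than the stationary point $\xs0$), and the clean passage from $\xs{t^*}=\xs0$ to full periodicity, which rests entirely on the uniqueness hypothesis for (\ref{eq:ode}). Everything else is continuity of $\x$ and a direct appeal to minimality; I expect no serious obstacle beyond keeping the role of the geometric decay $\delta_i\to0$ explicit.
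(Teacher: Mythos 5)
Your proof is correct and follows essentially the same route as the paper's: the geometric decay $\delta_i<2^{-i}D$ plus continuity gives $\xs{t^*}=\xs 0$, uniqueness yields periodicity, and minimality identifies the image with $\Omega$. You simply make explicit two details the paper leaves implicit, namely that $t^*\ge t_0>0$ and the verification that the periodic orbit is a nonempty closed invariant subset of $\Omega$.
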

\begin{proof}
According to our construction $\left|\xs 0-\xs{t_{i}}\right|=\delta_{i}<2^{-i}D$
for every $i$. Hence, by continuity $\xs{t^{*}}=\xs 0$, and $\x$
is periodic. By the minimality of $\Omega$, the image of $\x$ is
$\Omega$.
\end{proof}

\begin{proof}
[Proof of Theorem 1] Clearly, $\Omega$ is a singleton if and only
if it contains a point $\mathbf{y}\in\Omega$ such that $\mathbf{f}\left(\mathbf{y}\right)=\mathbf{0}$.
In this case and when the condition of Lemma \ref{lem:periodic so}
holds, we are done. Thus, we assume that $\mathbf{f}$ does not vanish
in $\Omega$ and that $t_{i}\rightarrow\infty$. 

Fix $\mathbf{y}_{0}\in\Omega$ such that $\left|\mathbf{y}_{0}-\xs 0\right|>2D$.
Using Lemma \ref{lem:Sard} we fix an arbitrary small ball $B=B\left(\mathbf{y}_{0},r_{0}\right)$,
such that $r_{0}<D$, and that $\left\{ 0\le t\le s|\left|\xt-\mathbf{y_{0}}\right|=r_{0}\right\} $
is finite for every $s>0$. Note that this implies that for every
$i$ the Jordan curve $\gamma_{i}$ intersects $\partial B$ at a
finite number of points, and that the portion of $\gamma_{i}$ in
$B$ corresponds to the trajectory $\x$. 

For every $i$ we denote the interior of $\gamma_{i}$ by $O_{i}$,
and, using the identity
\[
\partial\left(O_{i}\cap B\right)\subset\left(\partial O_{i}\cap B\right)\cup\left(O_{i}\cap\partial B\right)\cup\left(\partial O_{i}\cap\partial B\right),
\]
we obtain, by Lemma \ref{lem:zero int}, that 
\[
\mathbf{0}=\int_{\partial\left(O_{i}\cap B\right)}\n{\partial\left(O_{i}\cap B\right)}\left(\mathbf{y}\right)d\mathbf{y}=\int_{\partial O_{i}\cap B}\n{\partial O_{i}}\left(\mathbf{y}\right)d\mathbf{y}-\int_{O_{i}\cap\partial B}\n{\partial B}\left(\mathbf{y}\right)d\mathbf{y},
\]
since $\partial O_{i}\cap\partial B$ has zero measure. This bounds
\begin{equation}
\left|\int_{\partial O_{i}\cap B}\n{\partial O_{i}}\left(\mathbf{y}\right)d\mathbf{y}\right|=\left|\int_{O_{i}\cap\partial B}\n{\partial B}\left(\mathbf{y}\right)d\mathbf{y}\right|\le\left|\int_{O_{i}\cap\partial B}\left|\n{\partial B}\left(\mathbf{y}\right)\right|d\mathbf{y}\right|\le2\pi r_{0}.\label{eq:bound 2pir}
\end{equation}
For each $i$ the set $\partial O_{i}\cap B$ contains a finite number
of arcs, and applying a change of variable it is easy to see that
\[
\int_{\partial O_{i}\cap B}\n{\partial O_{i}}\left(\mathbf{y}\right)d\mathbf{y}=P_{i}\int_{\partial O_{i}\cap B}\t{\partial O_{i}}\left(\mathbf{y}\right)d\mathbf{y}=P_{i}\int_{\left\{ t\le t_{i}|\xs t\in B\right\} }\xdt dt,
\]
where $\t{\partial O_{i}}$ is chosen to agree with the direction
of $\gamma_{i}$, and $P_{i}$ is a $\frac{\pi}{2}$-rotation matrix.
Here we use the fact that the portion of $\gamma_{i}$ in $B$ corresponds
to the original trajectory $\x$. 

Combined with (\ref{eq:bound 2pir}) we conclude that for every $i$
\[
\left|\int_{\left\{ t\le t_{i}|\xs t\in B\right\} }\mathbf{f}\left(\xt\right)dt\right|\le2\pi r_{0}.
\]

The minimality of $\Omega$ and Lemma \ref{lem:recurense} implies
that the set $\left\{ t|\xs t\in B\right\} $ has infinite measure.
This implies that $\mathbf{0}$ is contained in the convex hull of
$\left\{ \mathbf{f}\left(\mathbf{y}\right)|\mathbf{y}\in\bar{B}\right\} $.
The radius $r_{0}$ can be chosen arbitrary small, thus, the continuity
of $\mathbf{f}$ implies that $\mathbf{f}\left(\mathbf{y}_{0}\right)=\mathbf{0}$,
in contradiction.

\bibliographystyle{plain}
\nocite{*}
\bibliography{bib}
\end{proof}

\end{document}